\numberwithin{equation}{section}
\theoremstyle{plain}
\newtheorem{theorem}{Theorem}[section]
\newtheorem{proposition}[theorem]{Proposition}
\newtheorem{corollary}[theorem]{Corollary}
\theoremstyle{definition}
\newtheorem{definition}[theorem]{Definition}
\newtheorem{remark}[theorem]{Remark}
\newtheorem*{acknowledgement}{Acknowledgements}
\begin{document}

\title[Inequalities on six points in a $\mathrm{CAT}(0)$ space]{Inequalities on six points in a $\mathrm{CAT}(0)$ space}

\author{Tetsu Toyoda}
\address{Kogakuin University, 2665-1, Nakano, Hachioji, Tokyo, 192-0015 Japan}
\email{toyoda@cc.kogakuin.ac.jp}
\subjclass[2020]{30L15, 53C23, 51F99}
\keywords{inequalities in metric spaces, $\mathrm{CAT}$(0) spaces, isometric embeddability}


\begin{abstract}
We establish a family of inequalities that hold true on any $6$ points in any $\mathrm{CAT}(0)$ space. 
We prove that the validity of these inequalities does not follow from any properties of $5$-point subsets of $\mathrm{CAT}(0)$ spaces. 
In particular, the validity of these inequalities does not follow from the $\mathrm{CAT}(0)$ $4$-point condition. 
\end{abstract}

\maketitle

\section{Introduction}

Many simple necessary and sufficient conditions for a geodesic metric space 
to be a $\mathrm{CAT}(0)$ space have been known. 
However, to find a characterization of those metric spaces that admit a distance-preserving embedding into a $\mathrm{CAT}(0)$ space is 
an open problem posed by Gromov 
(see \cite[Section 1.19+]{Gr1}, \cite[\S15]{Gr2}, \cite[Section 7]{AKP}, \cite[Chapter 10, E]{AKP-book}, \cite[Section 1.4]{ANN}, 
\cite[Section 2.2]{Bac} and \cite[Section 1]{toyoda-five}). 

A condition for points $x_1 ,\ldots ,x_n$ in a metric space $(X,d_X )$ is called 
a {\em quadratic metric inequality on $n$ points} 
if it can be written in the form 
\begin{equation}\label{qm-ineq}
0\leq
\sum_{i=1}^n\sum_{j=1}^n a_{ij}d_X (x_i ,x_j)^2
\end{equation}
by a real symmetric $n\times n$ matrix $(a_{ij})$ with $a_{kk}=0$ for any $k\in\{ 1,\ldots ,n\}$ (see \cite[Section 1.4.1]{ANN}). 
If \eqref{qm-ineq} holds true for all $x_1 ,\ldots ,x_n \in X$, then 
we say that the metric space $X$ {\em satisfies the quadratic metric inequality} \eqref{qm-ineq}. 
We call a quadratic metric inequality a {\em $\mathrm{CAT}(0)$ quadratic metric inequality} if 
every $\mathrm{CAT}(0)$ space satisfies it. 
Andoni, Naor and Neiman \cite[Proposition~3]{ANN} proved that 
an $n$-point metric space admits a distance-preserving embedding into a $\mathrm{CAT}(0)$ space if and only if it 
satisfies all $\mathrm{CAT}(0)$ quadratic metric inequalities on $n$ points. 
This result tells us that we can characterize those metric spaces that admit a distance-preserving embedding into a $\mathrm{CAT}(0)$ space 
if we can characterize $\mathrm{CAT}(0)$ quadratic metric inequalities.

We recall the following family of $\mathrm{CAT}(0)$ quadratic metric inequalities on $4$ points. 

\begin{definition}
We say that a metric space $(X,d_X )$ satisfies 
the {\em $\boxtimes$-inequalities} if we have 
\begin{multline*}
0
\leq
(1-s)(1-t) d_X (x,y)^2 +s(1-t) d_X (y,z)^2 +st d_X (z,w)^2 +(1-s)t d_X (w,x)^2 \\
-s(1-s) d_X (x,z)^2 -t(1-t) d_X (y,w)^2
\end{multline*}
for any $s, t \in\lbrack0,1\rbrack$ and any $x,y,z,w\in X$.  
\end{definition}
Gromov \cite{Gr2} and Sturm \cite{St} proved independently that every $\mathrm{CAT}(0)$ space satisfies the 
$\boxtimes$-inequalities. 
The name ``$\boxtimes$-inequalities" is based on a notation used by Gromov \cite{Gr2}. 
Sturm \cite{St} called these inequalities the {\em weighted quadruple inequalities}. 
The present author \cite{toyoda-five} proved that 
if a metric space $X$ satisfies the $\boxtimes$-inequalities, 
then $X$ satisfies all $\mathrm{CAT}(0)$ quadratic metric inequalities on $5$ points, which implies 
the following theorem. 

\begin{theorem}[\cite{toyoda-five}]\label{five-th}
A metric space $X$ with $|X|\leq 5$ admits 
a distance-preserving embedding into a $\mathrm{CAT}(0)$ space if and only if $X$ satisfies the 
$\boxtimes$-inequalities. 
\end{theorem}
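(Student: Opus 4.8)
The two implications have very different character. For ``if $X$ embeds distance-preservingly into a $\mathrm{CAT}(0)$ space $Y$ then $X$ satisfies the $\boxtimes$-inequalities'' the argument is immediate and needs no bound on $|X|$: each $\boxtimes$-inequality involves only the six mutual distances of four points, a distance-preserving map preserves those distances, and $Y$ itself satisfies the $\boxtimes$-inequalities by the Gromov--Sturm theorem recalled above.

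The substance is the converse. Assume $|X|\le 5$ and that $X$ satisfies the $\boxtimes$-inequalities. If $|X|\le 3$ we are done, since every metric space on at most three points embeds isometrically into $\mathbb{R}^2$ as a possibly degenerate triangle; so assume $|X|\in\{4,5\}$. By the Andoni--Naor--Neiman criterion it suffices to check that $X$ satisfies every $\mathrm{CAT}(0)$ quadratic metric inequality on $|X|$ points, and any $\mathrm{CAT}(0)$ quadratic metric inequality on $n<5$ points becomes one on $5$ points after the matrix $(a_{ij})$ is padded with zero rows and columns. So the theorem reduces to a purely metric implication, which in view of the Andoni--Naor--Neiman criterion can be rephrased as: \emph{every $5$-point metric space satisfying the $\boxtimes$-inequalities admits a distance-preserving embedding into a $\mathrm{CAT}(0)$ space.} I would prove this by an explicit construction. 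The base case is four points, where the plan is to show that a four-point metric space satisfying the $\boxtimes$-inequalities embeds isometrically into a concrete two-dimensional $\mathrm{CAT}(0)$ space --- a metric tree, the Euclidean plane, or a gluing of finitely many Euclidean triangles along edges, the case being governed by which of the relevant comparison triangles degenerate --- using the $\boxtimes$-inequality at $(s,t)=(\tfrac12,\tfrac12)$, i.e.\ the Berg--Nikolaev quadrilateral inequality, together with the full weighted family to control the comparison angles at every vertex. Then, given five points $x_1,\dots,x_5$, one would fix $x_5$, realize $x_1,\dots,x_4$ in such a two-dimensional model $M$, and \emph{extend the realization by one point}: place $x_5$ at the prescribed distances from $x_1,\dots,x_4$ inside a $\mathrm{CAT}(0)$ space built from $M$. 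Here one exploits that the class of $\mathrm{CAT}(0)$ spaces is closed under Reshetnyak gluing along complete convex subsets, so $M$ can be enlarged by gluing further $\mathrm{CAT}(0)$ pieces to it along points, segments, or convex polygons; the placement of $x_5$ then becomes a finite case analysis on the combinatorial type of the five-point configuration --- which comparison triangles are degenerate, which of the geodesics $[x_i,x_j]$ meet, and so on --- with the required enlargement exhibited in each case.

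The main obstacle is exactly this one-point extension. One has to show that the $\boxtimes$-inequalities, which by their very form only constrain four points at a time, nevertheless force the fifth point to admit a consistent placement; I expect this to require either a somewhat delicate enumeration of configuration types (kept finite by the four-point base case and by stability of $\mathrm{CAT}(0)$ under gluing) or, dually, a direct analysis of the convex cone of $\mathrm{CAT}(0)$ quadratic metric inequalities on $5$ points --- a closed, and in fact non-polyhedral, cone --- proving that each of its extreme rays is a nonnegative combination of $\boxtimes$-inequalities, perhaps via an algorithm that decomposes a given such inequality into $\boxtimes$-pieces. That no softer argument can bypass this step is precisely the moral of the present paper, in which the corresponding implication is shown to fail already at six points.
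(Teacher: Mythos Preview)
The paper does not prove Theorem~\ref{five-th}. It is stated as a citation from the author's earlier work \cite{toyoda-five} and used as an input to the present paper (specifically, to conclude at the end of the proof of Theorem~\ref{6-pt-ineq-th} that every $5$-point subset of Lebedeva's space embeds into a $\mathrm{CAT}(0)$ space). There is therefore no ``paper's own proof'' to compare your proposal against.

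As for the proposal itself: you are candid that what you have written is a strategy, not a proof. The easy direction is fine. For the hard direction you correctly isolate the real content --- the one-point extension from four to five --- and correctly anticipate that it devolves into a substantial case analysis; indeed, the original argument in \cite{toyoda-five} occupies an entire paper and proceeds essentially along the constructive lines you sketch (building an explicit $\mathrm{CAT}(0)$ target by gluing Euclidean pieces, governed by a combinatorial classification of the five-point configuration). Your alternative route via the extreme rays of the cone of $\mathrm{CAT}(0)$ quadratic metric inequalities on $5$ points is plausible in spirit but, as you note, that cone is non-polyhedral, so ``each extreme ray is a nonnegative combination of $\boxtimes$-inequalities'' is not a finite check and would itself require a structural argument you have not supplied. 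In short, nothing you wrote is wrong, but nothing you wrote constitutes a proof either; the hard work is exactly the enumeration you defer.
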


On the other hand, Nina Lebedeva constructed a family of $6$-point metric spaces that satisfy 
the $\boxtimes$-inequalities but do not admit a distance-preserving embedding into any $\mathrm{CAT}(0)$ space 
(see the arXiv version of \cite[Section 7.2]{AKP}). 
Therefore, 
it follows from the result of Andoni, Naor and Neiman that 
there exist $\mathrm{CAT}(0)$ quadratic metric inequalities on $6$ points that 
do not follow from the $\boxtimes$-inequalities. 
However, to the best of the author's knowledge, 
no such $\mathrm{CAT}(0)$ quadratic metric inequality has ever been known explicitly. 
In this paper, we establish the first explicit examples of such inequalities. 
The following theorem is our main result.

\begin{theorem}\label{6-pt-ineq-th}
Let $(X,d_{X})$ be a $\mathrm{CAT}(0)$ space. 
For any $a,b,c,s,t\in\lbrack 0,1\rbrack$ with $a\leq s$, 
and any points $x_0 ,x_1 ,y_0 ,y_1 ,z_0 ,z_1\in X$, we have
\begin{align}
&sta\big( (1-t)(1-a)+(1-s)tb\big) \, d_{X}(x_0 ,x_1 )^2 \nonumber\\
&\hspace{4pt}+s(1-t)t(1-b)b \, d_{X}(y_0 ,y_1 )^2 
+ab(1-c)c \, d_{X}(z_0 ,z_1 )^2 \nonumber\\
&\hspace{8pt}\leq
(1-s)tab(1-c)\, d_{X}(x_0 ,z_0 )^2
+stab(1-c)\, d_{X}(x_1 ,z_0 )^2 
+(1-t)ab(1-c) \, d_{X}(y_1 ,z_0 )^2 \nonumber\\
&\hspace{12pt}+(1-s)tabc \, d_{X}(x_0 ,z_1 )^2 
+stabc \, d_{X}(x_1 ,z_1 )^2 
+(1-t)abc\, d_{X}(y_1 ,z_1 )^2 \nonumber\\
&\hspace{16pt}+s(1-t)t(1-a)(1-b) \, d_{X}(x_0 ,y_0 )^2
+s(1-t)ta(1-b) \, d_{X}(x_1 ,y_0 )^2 \nonumber\\ 
&\hspace{20pt}+(1-t)t(s-a)b \, d_{X}(x_0 ,y_1 )^2 .
\label{6-pt-ineq}
\end{align}
Moreover, for any $a,b,c,s,t\in(0,1)$ with $a< s$, there exists a $6$-point metric space $(L,d_{L})$ 
that satisfies the following two conditions:
\begin{enumerate}
\item[$\mathrm{(i)}$]
Every $5$-point subset of $L$ admits a distance-preserving embedding into a $\mathrm{CAT}(0)$ space. 
\item[$\mathrm{(ii)}$]
$L$ does not satisfy the inequality \eqref{6-pt-ineq}
for the parameters $a,b,c,s,t$. 
More precisely, there exist $x_0 ,x_1 ,y_0 ,y_1 ,z_0 ,z_1 \in L$ such that 
\eqref{6-pt-ineq} does not hold true for $d_{X}=d_{L}$ and the given parameters $a,b,c,s,t$. 
\end{enumerate}
\end{theorem}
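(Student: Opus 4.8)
\textbf{Part 1: the inequality \eqref{6-pt-ineq}.} The plan is to argue entirely inside the given $\mathrm{CAT}(0)$ space $X$ by introducing auxiliary points and applying, at each step, the basic quadratic $\mathrm{CAT}(0)$ inequality
\[
d_X(m,w)^2 \le (1-\lambda)\,d_X(p,w)^2 + \lambda\,d_X(q,w)^2 - \lambda(1-\lambda)\,d_X(p,q)^2,
\]
valid whenever $m$ is the point on the geodesic $[p,q]$ with $d_X(p,m)=\lambda\,d_X(p,q)$ and $w\in X$ is arbitrary (the $\lambda$-weighted $\mathrm{CN}$ inequality, a degenerate case of the $\boxtimes$-inequalities). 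Concretely, I would take the points $x_s,x_a$ on $[x_0,x_1]$ with parameters $s$ and $a$ (using $a\le s$, so $x_a$ also lies on $[x_0,x_s]$), the point $y_b$ on $[y_0,y_1]$ with parameter $b$, the point $z_c$ on $[z_0,z_1]$ with parameter $c$, and the point $q$ on $[x_s,y_1]$ with $d_X(x_s,q)=(1-t)\,d_X(x_s,y_1)$. Then I would combine, with suitable nonnegative weights (products of $a,b,c,s,t$ and their complements), the instances of the displayed inequality obtained from: $q$ as a combination of $x_s,y_1$ evaluated at $w=z_0$ and $w=z_1$; $x_s$ as a combination of $x_0,x_1$ evaluated at $w=z_0$ and $w=z_1$; $z_c$ on $[z_0,z_1]$ together with $d_X(q,z_c)^2\ge 0$; $x_a$ as a combination of $x_0,x_s$ evaluated at $w=y_1$; $x_a$ as a combination of $x_0,x_1$ evaluated at $w=y_0$; and $y_b$ on $[y_0,y_1]$ together with $d_X(x_a,y_b)^2\ge 0$. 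All terms involving $x_s$, $x_a$, $y_b$, $z_c$ and $q$ cancel, and what survives is exactly \eqref{6-pt-ineq}. The only real work is bookkeeping: verifying that the coefficients of $d_X(x_0,x_1)^2$, of the mixed $x_iy_j$-terms and of the $z$-terms agree after these cancellations, which reduces to a handful of polynomial identities in $a,b,c,s,t$. A useful by-product is that \eqref{6-pt-ineq} holds with equality precisely when every invoked comparison inequality is an equality and, in addition, $x_a=y_b$ and $q=z_c$; such configurations are realized by $6$-point subsets of metric trees.

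\textbf{Part 2: the $6$-point space $L$.} Fix $a,b,c,s,t\in(0,1)$ with $a<s$, and let $d_0$ be the tree metric on $\{x_0,x_1,y_0,y_1,z_0,z_1\}$ coming from the equality configuration identified above; thus $d_0$ attains equality in \eqref{6-pt-ineq}, and, being a subset of an $\mathbb{R}$-tree, every subset of $d_0$ embeds isometrically into a $\mathrm{CAT}(0)$ space. I would then define $d_L$ by perturbing $d_0$ by a small amount $\varepsilon>0$ in a direction chosen so that the quantity $\Lambda$ given by (right-hand side of \eqref{6-pt-ineq}) minus (left-hand side of \eqref{6-pt-ineq}) becomes strictly negative, while every $4$-point subset still satisfies the $\boxtimes$-inequalities. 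Since $\Lambda$ is a fixed linear functional of the $15$ squared distances and vanishes at $d_0$, any perturbation that decreases $\Lambda$ gives condition (ii); the work lies in keeping the perturbation inside the region cut out by the $\boxtimes$-inequalities on the $\binom{6}{4}=15$ four-point subsets. Here I would use the tree structure of $d_0$ to determine exactly which of these $\boxtimes$-inequalities, and which triangle equalities, are active at $d_0$, and then exhibit an explicit admissible perturbation, for instance one supported on a single squared distance that appears with a definite sign in $\Lambda$ but is not constrained, to first order, by the active $\boxtimes$-inequalities. Condition (i) is then verified by checking the $\boxtimes$-inequalities on the fifteen four-point subsets of $(L,d_L)$ and invoking Theorem \ref{five-th} for the $\binom{6}{5}=6$ five-point subsets; condition (ii) is verified by substituting the chosen six points into \eqref{6-pt-ineq} and computing.

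\textbf{Main obstacle.} The crux is the feasibility of the perturbation in Part 2: one must check that \eqref{6-pt-ineq} does not lie in the convex cone generated by the $\boxtimes$-inequalities (and the metric constraints) that are tight at the tree configuration $d_0$, for otherwise no admissible direction decreasing $\Lambda$ could exist. Confirming this is precisely where the phenomenon that \eqref{6-pt-ineq} is genuinely new at six points, and not a consequence of properties of $5$-point subsets, has to be faced, and I expect it to require either a careful choice of $d_0$ (so that as few $\boxtimes$-inequalities as possible are active there) or a direct rank computation showing that a decreasing direction survives all the active constraints.
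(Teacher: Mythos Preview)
Your Part~1 is correct and is essentially the paper's argument. The paper phrases things with barycenters and the variance/double-variance inequalities \eqref{variance-ineq}--\eqref{double-variance-ineq}, but after unwinding (see in particular the paper's Proposition~\ref{thick-prop} and the chain \eqref{6-pt-doble-variance-ineq}--\eqref{6-point-deseired-ineq}) one is doing exactly the iterated CN-inequality bookkeeping you describe: your $x_s,x_a,y_b,z_c,q$ are the paper's $x,x',$ the point implicit in \eqref{6-pt-b-tri-ineq}, $z$, and the iterated version of $w$.

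Part~2, however, has a real gap, and it is precisely the ``main obstacle'' you flag. The equality configurations for \eqref{6-pt-ineq} are not special to trees: they are the Hilbert-space configurations with $x_a=y_b$ and $q=z_c$, and these exist in every dimension. If you insist on a metric tree, then for all the invoked CN inequalities to be equalities the six points must in fact lie on a line. But a line is the \emph{worst} possible base point for the perturbation: six collinear points have a host of tight triangle inequalities and, for every $4$-subset whose two ``diagonal'' segments overlap, a tight $\boxtimes$-inequality at interior parameters. Concretely, since $q=z_c$ lies strictly between $y_1$ and $x_s$ (hence between $y_1$ and one of $x_0,x_1$), some pair $p,r\in\{x_0,x_1,y_0,y_1\}$ straddles $[z_0,z_1]$; the $\boxtimes$-inequality for $\{p,z_0,z_1,r\}$ with diagonals $\{z_0,z_1\}$ and $\{p,r\}$ is then tight at some interior $(s',t')$, and increasing $d(z_0,z_1)^2$ makes it negative. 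The same obstruction kills the analogous single-distance perturbations of $d(x_0,x_1)^2$ and $d(y_0,y_1)^2$. So your suggested ``single squared distance'' perturbation does not produce a metric whose $5$-point subsets embed into $\mathrm{CAT}(0)$ spaces.

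The paper avoids this by choosing a \emph{three-dimensional} equality configuration: Lebedeva's octahedral six points in $\mathbb{R}^3$ satisfying \eqref{lebedeva-condition} together with the alignment $(1-t)y_1+t((1-s)x_0+sx_1)=(1-c)z_0+cz_1$ and $(1-a)x_0+ax_1=(1-b)y_0+by_1$. Being in general position in $\mathbb{R}^3$, no triangle inequalities are tight and the $\boxtimes$-constraints have slack; the perturbation $d_\varepsilon(z_0,z_1)=\|z_0-z_1\|+\varepsilon$ then violates \eqref{6-pt-ineq} (its $d(z_0,z_1)^2$-coefficient $ab(1-c)c$ is positive), while Lebedeva's Theorem~\ref{Lebedeva-th} (a nontrivial input you would otherwise have to reprove) guarantees that for small $\varepsilon$ all $\boxtimes$-inequalities survive, whence Theorem~\ref{five-th} gives condition~(i). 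The moral is that your Part~1 analysis already hands you the right equality configuration in $\mathbb{R}^3$; use that, not a tree.
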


In the proof of the moreover part of Theorem~\ref{6-pt-ineq-th}, we prove that 
we can find the desired $6$-point metric space $L$ in Lebedeva's family of $6$-point metric spaces.

\subsection{The Andoni-Naor-Neiman inequalities}

Andoni, Naor and Neiman \cite{ANN} established the following family of $\mathrm{CAT}(0)$ quadratic metric inequalities (see Lemma~27 and Section 5.1 in \cite{ANN}). 
We denote by $\lbrack n\rbrack$ the set $\{ 1,2,\ldots ,n\}$ for each positive integer $n$. 

\begin{theorem}[Andoni, Naor and Neiman \cite{ANN}]\label{ANN-ineq-th}
Fix positive integers $m$ and $n$. 
Let $c_1 ,\ldots ,c_m$ be positive real numbers. 
Suppose for every $k\in\lbrack m\rbrack$, $p_1^k ,\ldots ,p_n^k ,q_1^k ,\ldots ,q_n^k$ are positive real numbers that satisfy 
\begin{equation*}
\sum_{i=1}^n p_i^k =\sum_{j=1}^n q_j^k =1.
\end{equation*}
Suppose for every $k\in\lbrack m\rbrack$, 
$A_k=(a_{ij}^k)$ and $B_k=(b_{ij}^k)$ are $n\times n$ matrices with nonnegative real entries 
that satisfy 
\begin{equation*}
\sum_{s=1}^n a_{is}^{k}+\sum_{s=1}^n b_{sj}^{k}=p_i^k +q_j^k
\end{equation*}
for any $i,j\in\lbrack n\rbrack$. 
Then we have 
\begin{equation}\label{ANN-ineq}
\sum_{k=1}^{m}\hspace{2mm}\sum_{i,j\in\lbrack n\rbrack : a_{ij}^k +b_{ij}^k >0}\frac{c_k a_{ij}^k b_{ij}^k}{a_{ij}^k +b_{ij}^k}d_X (x_i ,x_j )^2
\leq
\sum_{k=1}^{m}\sum_{i=1}^n \sum_{j=1}^n c_k p_i^k q_j^k d_X (x_i ,x_j )^2
\end{equation}
for any $\mathrm{CAT}(0)$ space $(X,d_X)$ and any points $x_1 ,\ldots ,x_n \in X$. 
\end{theorem}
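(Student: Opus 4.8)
The plan is to reduce \eqref{ANN-ineq} to its $m=1$ case and then derive that case from two standard $\mathrm{CAT}(0)$ inequalities. Since \eqref{ANN-ineq} is additive in the index $k$ and each $c_k$ is a positive scalar, it suffices to treat $m=1$, $c_1=1$; writing $p_i,q_j$ for $p_i^1,q_j^1$ and $A=(a_{ij})$, $B=(b_{ij})$ for $A_1,B_1$, the goal becomes to prove $\sum_{i,j:a_{ij}+b_{ij}>0}\frac{a_{ij}b_{ij}}{a_{ij}+b_{ij}}\,d_X(x_i,x_j)^2\le\sum_{i,j}p_iq_j\,d_X(x_i,x_j)^2$.

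The first ingredient is the elementary pointwise estimate that in a $\mathrm{CAT}(0)$ space, for all $x,y,z\in X$ and all $\alpha,\beta\ge0$ with $\alpha+\beta>0$ one has $\frac{\alpha\beta}{\alpha+\beta}\,d_X(x,y)^2\le\alpha\,d_X(z,x)^2+\beta\,d_X(z,y)^2$; this follows by applying the comparison inequality $d_X(z,w)^2\le(1-\theta)d_X(z,x)^2+\theta\,d_X(z,y)^2-\theta(1-\theta)d_X(x,y)^2$ to the point $w$ on $[x,y]$ with $\theta=\beta/(\alpha+\beta)$ and dropping $-d_X(z,w)^2$. Applying this with $\alpha=a_{ij}$, $\beta=b_{ij}$, with a single auxiliary point $z$ used for all pairs, and summing over $i,j$ yields $\sum_{i,j}\frac{a_{ij}b_{ij}}{a_{ij}+b_{ij}}d_X(x_i,x_j)^2\le\sum_i\bigl(\sum_s a_{is}\bigr)d_X(z,x_i)^2+\sum_j\bigl(\sum_s b_{sj}\bigr)d_X(z,x_j)^2$; the marginal hypothesis $\sum_s a_{is}+\sum_s b_{sj}=p_i+q_j$ applied with $j=i$ collapses the right-hand side to $\sum_i(p_i+q_i)\,d_X(z,x_i)^2$. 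Hence $\sum_{i,j}\frac{a_{ij}b_{ij}}{a_{ij}+b_{ij}}d_X(x_i,x_j)^2\le\sum_i(p_i+q_i)\,d_X(z,x_i)^2$ for \emph{every} choice of $z$.

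The second ingredient is to take $z=\bar q$, the barycenter of the probability measure $\sum_i q_i\delta_{x_i}$ — which exists once $X$ is replaced by a complete $\mathrm{CAT}(0)$ space containing $x_1,\dots,x_n$ (for instance its completion) — and to invoke the $\mathrm{CAT}(0)$ variance inequality $\sum_i q_i\,d_X(w,x_i)^2\ge d_X(w,\bar q)^2+\sum_i q_i\,d_X(\bar q,x_i)^2$, valid for every $w\in X$. Taking $w=x_i$ and summing against $p_i$ gives $\sum_{i,j}p_iq_j\,d_X(x_i,x_j)^2\ge\sum_i p_i\,d_X(x_i,\bar q)^2+\sum_i q_i\,d_X(\bar q,x_i)^2=\sum_i(p_i+q_i)\,d_X(\bar q,x_i)^2$, which is precisely the bound of the preceding paragraph in the case $z=\bar q$. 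Chaining the two inequalities completes the proof.

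I expect the argument to be essentially as above; there is no deep obstacle, but two points deserve care. The genuinely $\mathrm{CAT}(0)$-specific content sits entirely in the comparison inequality and in the variance (parallel-axis) inequality — the latter is the real use of nonpositive curvature and should be quoted as such, after ensuring the barycenter $\bar q$ exists by passing to a complete space — while the rest is bookkeeping whose one delicate point is keeping the index conventions of the marginal hypothesis straight, since it links row sums of $A$ to column sums of $B$. The conceptual content is that, after the $m=1$ reduction and this marginal identity, the whole apparatus $(m,n,c_k,p^k,q^k,A_k,B_k)$ reduces to the single quantity $\sum_i(p_i+q_i)d_X(\bar q,x_i)^2$, which lies between the two sides of \eqref{ANN-ineq}.
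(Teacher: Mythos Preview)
The paper does not give its own proof of this theorem: it is stated as a result of Andoni, Naor and Neiman, with a pointer to Lemma~27 and Section~5.1 of \cite{ANN}, and is used only as background. So there is no in-paper argument to compare against.

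That said, your proof is correct and is essentially the standard derivation. The reduction to $m=1$, $c_1=1$ is immediate. Your ``first ingredient'' is exactly the inequality \eqref{quadratic-tri-ineq} of the paper (after scaling by $\alpha+\beta$ with $\theta=\beta/(\alpha+\beta)$); note, as the paper remarks, that this step actually holds in every metric space, so you need not invoke curvature here. The collapse of the right-hand side to $\sum_i(p_i+q_i)\,d_X(z,x_i)^2$ via the marginal hypothesis with $j=i$ is clean. Your ``second ingredient'' is precisely the variance inequality \eqref{variance-ineq} that the paper recalls for barycenters; and since the paper's Definition~\ref{CAT(0)-def} already builds completeness into the notion of a $\mathrm{CAT}(0)$ space, your caveat about passing to a completion is unnecessary in this context. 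The chaining of the two inequalities then yields \eqref{ANN-ineq} exactly as you say.
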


It is not hard to see that 
the $\boxtimes$-inequalities can be written in the form \eqref{ANN-ineq}. 
Moreover, as stated in \cite[Section 5.1]{ANN}, 
it seems that all of the previously used $\mathrm{CAT}(0)$ quadratic metric inequalities, 
are of the form \eqref{ANN-ineq}.
Recently, 
the present author \cite{toyoda-ANN} proved that 
Lebedeva's $6$-point metric spaces 
satisfy all inequalities of the form \eqref{ANN-ineq} (see Theorem~\ref{L-ANN-th}). 
Together with this fact, the argument in the proof of Theorem~\ref{6-pt-ineq-th} yields the following corollary. 

\begin{corollary}\label{6-pt-ANN-coro}
For any $a,b,c,s,t\in(0,1)$ with $a<s$, there exists a $6$-point metric space $L$ 
that satisfies the following two conditions:
\begin{enumerate}
\item[$(\mathrm{i})$]
$L$ satisfies the inequality \eqref{ANN-ineq} for any $m$, $n$, 
$c_1 ,\ldots ,c_m$, $\left( p_{i}^{1}\right)_{i\in\lbrack n\rbrack},\ldots ,\left( p_{i}^{m}\right)_{i\in\lbrack n\rbrack}$, 
$\left( q_{j}^{1}\right)_{j\in\lbrack n\rbrack},\ldots ,\left( q_{j}^{m}\right)_{j\in\lbrack n\rbrack}$, 
$A_1 ,\ldots ,A_m$, $B_1 ,\ldots ,B_m$ as in the statement of Theorem~\ref{ANN-ineq-th}.
\item[$(\mathrm{ii})$]
$L$ does not satisfy the inequality \eqref{6-pt-ineq}
for the parameters $a,b,c,s,t$. 
\end{enumerate}
\end{corollary}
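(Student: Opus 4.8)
The plan is to deduce Corollary~\ref{6-pt-ANN-coro} by combining two results that are already at our disposal: the moreover part of Theorem~\ref{6-pt-ineq-th}, which produces a violating $6$-point metric space inside Lebedeva's family, and Theorem~\ref{L-ANN-th} (from \cite{toyoda-ANN}), which guarantees that \emph{every} member of Lebedeva's family satisfies all inequalities of the form \eqref{ANN-ineq}. No new geometric estimate is needed; the entire content of the corollary is the observation that these two facts concern the same family of spaces.

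First I would fix $a,b,c,s,t\in(0,1)$ with $a<s$ and apply the moreover part of Theorem~\ref{6-pt-ineq-th} to obtain a $6$-point metric space $L$ for which \eqref{6-pt-ineq} fails at these parameters. The key point, recorded in the discussion following the statement of Theorem~\ref{6-pt-ineq-th}, is that the $L$ furnished by that proof is explicitly a member of Lebedeva's family; I would make this membership precise by exhibiting the Lebedeva parameters of $L$ as the functions of $a,b,c,s,t$ used in the construction. Condition $(\mathrm{ii})$ of the corollary is then immediate, since it is verbatim condition $(\mathrm{ii})$ of the moreover part of Theorem~\ref{6-pt-ineq-th}. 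For condition $(\mathrm{i})$ I would simply invoke Theorem~\ref{L-ANN-th}: because $L$ lies in Lebedeva's family, it satisfies \eqref{ANN-ineq} for every admissible choice of $m$, $n$, the constants $c_k$, the weights $(p_i^k)$ and $(q_j^k)$, and the nonnegative matrices $A_k$ and $B_k$ subject to the constraints in Theorem~\ref{ANN-ineq-th}. This is precisely condition $(\mathrm{i})$, and the proof is complete.

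It is worth noting that condition $(\mathrm{i})$ of the corollary is genuinely stronger than condition $(\mathrm{i})$ of the moreover part of Theorem~\ref{6-pt-ineq-th}: since the $\boxtimes$-inequalities are of the form \eqref{ANN-ineq}, satisfying all inequalities \eqref{ANN-ineq} forces every $5$-point subset of $L$ to satisfy the $\boxtimes$-inequalities, whence to embed isometrically into a $\mathrm{CAT}(0)$ space by Theorem~\ref{five-th}. Thus the only thing that requires care --- rather than a genuine obstacle --- is the compatibility of the two black boxes: one must verify that the particular space singled out in the proof of the moreover part of Theorem~\ref{6-pt-ineq-th} is recognizably an instance to which Theorem~\ref{L-ANN-th} applies. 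As both results are stated for the same family, this reduces to bookkeeping on the parametrization and requires no quadratic computation beyond those already performed for Theorem~\ref{6-pt-ineq-th}.
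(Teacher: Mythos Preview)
Your approach is essentially the same as the paper's, which likewise derives the corollary by combining the construction in the proof of Theorem~\ref{6-pt-ineq-th} with Theorem~\ref{L-ANN-th}. One imprecision is worth flagging: Theorem~\ref{L-ANN-th} does \emph{not} assert that every member $(L,d_\varepsilon)$ of Lebedeva's family satisfies all inequalities \eqref{ANN-ineq}, but only those with $\varepsilon\in(0,C\rbrack$ for some $C>0$ depending on the configuration; so you cannot simply take the space $L$ handed to you by the moreover part of Theorem~\ref{6-pt-ineq-th} as a black box and feed it into Theorem~\ref{L-ANN-th}, since the $\varepsilon$ chosen there was calibrated to Theorem~\ref{Lebedeva-th} rather than to Theorem~\ref{L-ANN-th}. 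The paper handles this by going back inside the proof of Theorem~\ref{6-pt-ineq-th} and using that \eqref{6-pt-ineq} fails for \emph{every} $\varepsilon>0$, so one may shrink $\varepsilon$ below the threshold of Theorem~\ref{L-ANN-th}; your final paragraph anticipates exactly this compatibility check, so the fix is the bookkeeping you describe.
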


We will prove Corollary~\ref{6-pt-ANN-coro} in Section \ref{proof-sec}.

\subsection{Graph comparison}

Gromov \cite[\S 7]{Gr2} introduced the following notion. 

\begin{definition}[Gromov \cite{Gr2}]\label{Cycl-def}
Let $n$ be an integer at least $4$, and let $C_{n}=(V,E)$ be the $n$-vertex cycle graph with vertex set $V$ and edge set $E$. 
A metric space $(X,d_X )$ is said to satisfy the {\em $\mathrm{Cycl}_{n}(0)$ condition} if 
for any map $f\colon V\to X$, there exists a map $g\colon V\to\mathcal{H}$ to a Hilbert space $\mathcal{H}$ such that 
\begin{equation*}
\begin{cases}
\|g(u)-g(v)\|\leq d_X (f(u),f(v)),\quad\textrm{if }\{ u,v\}\in E,\\
\|g(u)-g(v)\|\geq d_X (f(u),f(v)),\quad\textrm{if }\{ u,v\}\not\in E
\end{cases}
\end{equation*}
for any $u,v\in V$. 
\end{definition}

By Reshetnyak's majorization theorem \cite{R}, every $\mathrm{CAT}(0)$ space satisfies 
the $\mathrm{Cycl}_n (0)$ condition for any integer $n\geq 4$. 
Gromov \cite{Gr2} proved that a geodesic metric space is $\mathrm{CAT}(0)$ if and only if 
it satisfies the $\mathrm{Cycl}_4 (0)$ condition. 
He also proved that a metric space satisfies the $\mathrm{Cycl}_4 (0)$ condition if and only if 
it satisfies the $\boxtimes$-inequalities. 
Thus, by Theorem~\ref{five-th}, a metric space $X$ with $|X|\leq 5$ admits a distance-preserving embedding into a $\mathrm{CAT}(0)$ space if and only if 
$X$ satisfies the $\mathrm{Cycl}_{4}(0)$ condition. 
Furthermore, the present author \cite{toyoda-cycle} proved that 
if a metric space satisfies the $\mathrm{Cycl}_{4}(0)$ condition, then it also satisfies the $\mathrm{Cycl}_{n}(0)$ condition for 
any integer $n\geq 4$. 
On the other hand, the existence of Lebedeva's $6$-point metric spaces that we have mentioned before shows that 
the $\mathrm{Cycl}_4 (0)$ condition is not a sufficient condition for a $6$-point metric space to admit a distance-preserving embedding into a $\mathrm{CAT}(0)$ space. 

Lebedeva, Petrunin and Zolotov \cite{LPZ} introduced the following more general notion. 
We assume that graphs are always simple and undirected. 

\begin{definition}[Lebedeva, Petrunin and Zolotov \cite{LPZ}]\label{G-comp}
Let $G=(V,E)$ be a graph with vertex set $V$ and edge set $E$. 
A metric space $(X,d_X )$ is said to satisfy the {\em $G$-comparison} if 
for any map $f\colon V\to X$, there exists a map $g\colon V\to\mathcal{H}$ to a Hilbert space $\mathcal{H}$ such that 
\begin{equation*}
\begin{cases}
\|g(u)-g(v)\|\leq d_X (f(u),f(v)),\quad\textrm{if }\{ u,v\}\in E,\\
\|g(u)-g(v)\|\geq d_X (f(u),f(v)),\quad\textrm{if }\{ u,v\}\not\in E
\end{cases}
\end{equation*}
for any $u,v\in V$.  
\end{definition}

For each integer $n\geq 4$, the $C_n$-comparison is no other than the $\mathrm{Cycl}_{n}(0)$ condition. 
For recent developments concerning graph comparison, see \cite{LP-tree}, \cite{LP-graph} and \cite{LPZ}. 
Let $O_3$ be the octahedron graph (see \textsc{Figure}~\ref{O3-fig}). 
Lebedeva and Petrunin \cite{LP-to} posed the question whether 
all $\mathrm{CAT}(0)$ spaces satisfy the $O_3$-comparison, and if the answer is yes, 
whether the $O_3$-comparison becomes a sufficient condition for a $6$-point metric space to admit a distance-preserving embedding into a $\mathrm{CAT}(0)$ space. 
In \cite{LP-tree}, Lebedeva and Petrunin proved that products of metric trees satisfy the $O_3$-comparison. 
However, it remains an open question whether all $\mathrm{CAT}(0)$ spaces satisfy the $O_3$-comparison. 
We note that the validity of the inequalities \eqref{6-pt-ineq} follows from the $O_3$-comparison.

\begin{figure}[htbp]
\centering
\begin{tikzpicture}[scale=0.4]
\draw (0,0) -- (1,1.73);
\draw (1,1.73) -- (3,1.73);
\draw (3,1.73) -- (4,0);
\draw (4,0) -- (3,-1.73);
\draw (3,-1.73) -- (1,-1.73);
\draw (1,-1.73) -- (0,0);
\draw (0,0) -- (3,1.73);
\draw (1,-1.73) -- (4,0);
\draw (0,0) -- (3,-1.73);
\draw (1,1.73) -- (4,0);
\draw (1,1.73) -- (1,-1.73);
\draw (3,1.73) -- (3,-1.73);
\draw [fill] (0,0) circle [radius=0.15];
\draw [fill] (1,1.73) circle [radius=0.15];
\draw [fill] (3,1.73) circle [radius=0.15];
\draw [fill] (4,0) circle [radius=0.15];
\draw [fill] (1,-1.73) circle [radius=0.15];
\draw [fill] (3,-1.73) circle [radius=0.15];
\end{tikzpicture}
\caption{The octahedron graph $O_3$. }
\label{O3-fig}
\end{figure}

\begin{proposition}\label{O3-prop}
If a metric space $(X,d_{X})$ satisfies the $O_3$-comparison, then $X$ satisfies the inequality \eqref{6-pt-ineq} for 
any $a,b,c,s,t\in\lbrack 0,1\rbrack$ with $a\leq s$. 
\end{proposition}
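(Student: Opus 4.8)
The plan is to exhibit, for the octahedron graph $O_3$ with a suitably chosen labelling of its six vertices by $x_0 ,x_1 ,y_0 ,y_1 ,z_0 ,z_1$, that the $O_3$-comparison yields a map $g$ into a Hilbert space $\mathcal H$ whose squared distances, when plugged into a Hilbert-space identity, produce exactly \eqref{6-pt-ineq}. The key observation is that $O_3$ is the complete tripartite graph $K_{2,2,2}$ on the three ``antipodal'' pairs $\{x_0,x_1\}$, $\{y_0,y_1\}$, $\{z_0,z_1\}$: the non-edges of $O_3$ are precisely these three pairs, and every other pair is an edge. So the $O_3$-comparison gives $g$ with $\|g(p)-g(q)\|\le d_X(p,q)$ whenever $p,q$ lie in different pairs, and $\|g(p)-g(q)\|\ge d_X(p,q)$ when $\{p,q\}$ is one of the three antipodal pairs.

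First I would identify, inside the Hilbert space, the unconditional inequality that \eqref{6-pt-ineq} is the ``metric shadow'' of. Every $\mathrm{CAT}(0)$ quadratic metric inequality on finitely many points that holds for all Hilbert spaces is, after clearing denominators, a statement that a certain quadratic form in the $g$-vectors is $\ge 0$, i.e. that a certain symmetric matrix is positive semidefinite; concretely one writes the right-hand side minus the left-hand side of \eqref{6-pt-ineq}, with $d_X$ replaced by $\|g(\cdot)-g(\cdot)\|$, as $\big\| \sum_i \lambda_i g(p_i)\big\|^2 \ge 0$ for appropriate coefficients $\lambda_i$ summing to zero (possibly a sum of such squares). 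Granting that representation as a sum of squares — which is what makes \eqref{6-pt-ineq} a genuine $\mathrm{CAT}(0)$ quadratic metric inequality and is presumably how the authors discovered it — the proof of the proposition is then the routine bookkeeping: on the left-hand side of \eqref{6-pt-ineq} the pairs $(x_0,x_1)$, $(y_0,y_1)$, $(z_0,z_1)$ are exactly the antipodal (non-edge) pairs, where $d_X \le \|g(\cdot)-g(\cdot)\|$, so replacing $d_X^2$ by the larger $\|g\|^2$ only strengthens the needed inequality; and on the right-hand side every pair appearing — $(x_0,z_0)$, $(x_1,z_0)$, $(y_1,z_0)$, $(x_0,z_1)$, $(x_1,z_1)$, $(y_1,z_1)$, $(x_0,y_0)$, $(x_1,y_0)$, $(x_0,y_1)$ — is a pair of vertices in distinct antipodal classes, hence an edge of $O_3$, where $d_X \ge \|g(\cdot)-g(\cdot)\|$, so replacing $d_X^2$ by the smaller $\|g\|^2$ again only strengthens the inequality. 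Thus \eqref{6-pt-ineq} for $(X,d_X)$ follows from the same inequality for $g$ in $\mathcal H$, which holds by the sum-of-squares identity.

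The step I expect to be the main obstacle is verifying, honestly and in full, that the Hilbert-space version of \eqref{6-pt-ineq} — with $d_X$ replaced by $\|g(\cdot)-g(\cdot)\|$ — is indeed a valid identity expressible as a nonnegative combination of squares for all $a,b,c,s,t\in[0,1]$ with $a\le s$. Concretely, one must produce the explicit coefficients: I anticipate the relevant auxiliary points are the three ``barycentric'' combinations $m_x = (1-s)g(x_0)+s\,g(x_1)$ along the $x$-pair, a weighted point along the $y$-pair with weights $(1-b,b)$, and the $z$-midpoint-type combination with weights $(1-c,c)$, together with the nesting produced by the parameters $t$ and $a$ (the hypothesis $a\le s$ should ensure the coefficient $(s-a)$, and hence the genuinely ``new'' term $(1-t)t(s-a)b\,d_X(x_0,y_1)^2$, has the correct sign). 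Once these combinations are pinned down, the verification reduces to checking that a small number of coefficients match after expansion — tedious but mechanical — and the rest of the argument is exactly the edge/non-edge sign analysis above. I would organize the writeup so that the combinatorial identification of $O_3$ with $K_{2,2,2}$ and the sign bookkeeping come first, isolating the purely algebraic Hilbert-space identity as the one computational lemma.
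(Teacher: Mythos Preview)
Your approach is correct and is essentially the paper's: label the three antipodal pairs of $O_3$ by $\{x_0,x_1\}$, $\{y_0,y_1\}$, $\{z_0,z_1\}$, apply the $O_3$-comparison to obtain Hilbert-space points, and then observe that the left-hand side of \eqref{6-pt-ineq} involves only the three non-edge pairs (where $d_X\le\|g(\cdot)-g(\cdot)\|$) while the right-hand side involves only edge pairs (where $d_X\ge\|g(\cdot)-g(\cdot)\|$), so the inequality transfers from $\mathcal{H}$ back to $X$. The one place you overcomplicate things is the step you flag as the ``main obstacle'': you do not need any explicit sum-of-squares decomposition to establish \eqref{6-pt-ineq} in $\mathcal{H}$, because Hilbert spaces are $\mathrm{CAT}(0)$ and Theorem~\ref{6-pt-ineq-th} already proves \eqref{6-pt-ineq} for every $\mathrm{CAT}(0)$ space --- so the Hilbert-space case is given, and the proof is just the edge/non-edge sign bookkeeping you describe.
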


Proposition~\ref{O3-prop} follows from Theorem \ref{6-pt-ineq-th} and the definition of 
the $O_3$-comparison immediately. 
We will prove it at the end of Section \ref{proof-sec}.

\subsection{Towards a characterization of $6$-point subsets of $\mathrm{CAT}(0)$ spaces}
It is natural to ask whether the validity of all inequalities of the form \eqref{6-pt-ineq} is a sufficient condition for 
a $6$-point metric space to admit a distance-preserving embedding into a $\mathrm{CAT}(0)$ space. 
However, at this moment, the author has no guess whether the answer is yes or no.

\subsection{Organization of the paper}
This paper is organized as follows. 
In Section \ref{preliminaries-sec}, we recall some facts concerning $\mathrm{CAT}(0)$ spaces, and recall the definition of Lebedeva's $6$-point metric spaces. 
In Section \ref{proof-sec}, we prove Theorem~\ref{6-pt-ineq-th}, Corollary~\ref{6-pt-ANN-coro} and Proposition~\ref{O3-prop}.

\section{Preliminaries}\label{preliminaries-sec}

In this section, we recall some facts concerning $\mathrm{CAT}(0)$ spaces, and recall the definition of Lebedeva's $6$-point metric spaces. 
For a detailed exposition of $\mathrm{CAT}(0)$ spaces, see \cite{AKP-book}, \cite{BH}, \cite{BBI} and \cite{St}. 

A {\em geodesic} in a metric space $X$ is a distance-preserving embedding of an interval of the real line into $X$. 
For any $x,y\in X$, the image of a geodesic $\gamma\colon \lbrack 0,d_X (x,y)\rbrack\to X$ with $\gamma (0)=x$ and $\gamma (d_{X}(x,y))=y$ is 
called a {\em geodesic segment with endpoints $x$ and $y$}. 
A metric space $X$ is called {\em geodesic} if for any $x,y\in X$, there exists a geodesic segment with endpoints $x$ and $y$. 
We recall the definition of $\mathrm{CAT}(0)$ spaces. 

\begin{definition}\label{CAT(0)-def}
A complete geodesic metric space $(X, d_X )$ is called a {\em $\mathrm{CAT}(0)$ space} if 
for any $x,y\in X$, and any geodesic $\gamma\colon\lbrack 0,d_X (x,y)\rbrack\to X$ 
with $\gamma(0) = x$ and $\gamma(d_X (x,y)) = y$, 
the inequality 
\begin{align}\label{CAT(0)-def-ineq}
d_X \left(z, \gamma(t d_X (x,y) )\right)^2 \leq 
 (1-t)d_X (z,x) ^2 +td_X (z,y)^2 -(1-t)t d_X (x,y)^2
\end{align}
holds true for any $z\in X$ and any $t\in\lbrack 0,1\rbrack$. 
\end{definition}

In a $\mathrm{CAT}(0)$ space $X$, for any $x,y\in X$, a geodesic segment with endpoints $x$ and $y$ exists uniquely. 
It is straightforward to see that \eqref{CAT(0)-def-ineq} holds with equality when $X$ is a Hilbert space. 
In particular, Hilbert spaces are $\mathrm{CAT}(0)$. 

\begin{remark}
Let $(X,d_X )$ be a $\mathrm{CAT}(0)$ space, and let $x,y,z\in X$. 
Suppose that $\gamma\colon\lbrack 0,d_X (x,y)\rbrack\to X$ is the geodesic such that $\gamma(0) = x$ and $\gamma(d_X (x,y)) = y$. 
Then Definition~\ref{CAT(0)-def} implies that 
\begin{equation}\label{quadratic-tri-ineq}
0\leq (1-t)d_X (z,x) ^2 +td_X (z,y)^2 -(1-t)t d_X (x,y)^2
\end{equation}
for any $t\in\lbrack 0,1\rbrack$, and 
equality holds in \eqref{quadratic-tri-ineq} if and only if $z=\gamma (td_X (x,y))$. 
Although not necessary for our purposes, we note that \eqref{quadratic-tri-ineq} actually follows only from 
the triangle inequality 
\begin{equation*}
d_{X}(x,y)\leq d_{X}(z,x)+d_{X}(z,y)
\end{equation*}
and the nonnegativity of $d_{X}$. 
Therefore, \eqref{quadratic-tri-ineq} holds in every metric space. 
\end{remark}

We recall the notion of barycenters. 
Barycenters in this context were considered by Kleiner \cite{Kl} in his work about the dimension of 
Alexandrov spaces with curvature bounded above. 

Let $(X,d_X )$ be a $\mathrm{CAT}(0)$ space, and let $x_1 ,\ldots ,x_n \in X$. 
Suppose $p_1 ,\ldots ,p_n$ are positive real numbers with $\sum_{i=1}^n p_i =1$. 
Then there exists a unique point $z\in X$ that satisfies 
\begin{equation}\label{variance-ineq}
d_X (w ,z)^2 +\sum_{i=1}^n p_i d_X (z, x_i )^2
\leq
\sum_{i=1}^n p_i d_X (w, x_i )^2
\end{equation}
for any $w\in X$ (see Lemma~4.4 and Theorem~4.9 in \cite{St}). 
The point $z\in X$ with this property is called the {\em barycenter} of the probability measure $\mu =\sum_{i=1}^n p_i \delta_{x_i}$ on $X$, 
where 
$\delta_{x_i}$ is the Dirac measure at $x_i$. 
We denote the barycenter of $\mu$ by $\mathrm{bar}(\mu)$. 
We note that for any geodesic 
$\gamma\colon\lbrack 0,d_X (x,y)\rbrack\to X$ with $\gamma(0) = x$ and $\gamma(d_X (x,y)) = y$, 
we have 
\begin{equation*}
\mathrm{bar}((1-t)\delta_{x}+t\delta_{y})=\gamma (td_X (x,y))
\end{equation*}
for any $t\in\lbrack 0,1\rbrack$. 
For any finitely supported probability measures $\mu =\sum_{i=1}^{n} p_i \delta_{x_i}$ and $\nu =\sum_{j=1}^{m} q_j \delta_{y_j}$ on $X$, 
by applying \eqref{variance-ineq} twice, we obtain the following inequality (see \cite[Section 5.2]{ANN}): 
\begin{multline}\label{double-variance-ineq}
d_{X}(\mathrm{bar}(\mu ),\mathrm{bar}(\nu ))^2
+\sum_{i=1}^{n} p_i d_X (\mathrm{bar}(\mu ), x_i )^2
+\sum_{j=1}^{m} q_j d_X (\mathrm{bar}(\nu ), y_j )^2 \\
\leq
\sum_{i=1}^{n}\sum_{j=1}^{m} p_{i}q_{j} d_X (x_i ,y_j )^2 .
\end{multline}
When $X$ is a Hilbert space, 
we have $\mathrm{bar}(\mu )=\sum_{i=1}^{n}p_{i}x_{i}$ for any finitely supported probability measure 
$\mu =\sum_{i=1}^{n} p_i \delta_{x_i}$ on $X$. 
Therefore, it follows from direct computations that 
both \eqref{variance-ineq} and \eqref{double-variance-ineq} hold with equality when $X$ is a Hilbert space.

Although the following proposition is an immediate consequence of Definition~\ref{CAT(0)-def}, 
it plays a key role in the proof of Theorem~\ref{6-pt-ineq-th}. 

\begin{proposition}\label{thick-prop}
Let $(X,d_{X})$ be a $\mathrm{CAT}(0)$ space, and fix $x_0 ,x_1 ,y\in X$. 
Suppose that $\gamma\colon \lbrack 0,d_{X}(x_{0},x_{1})\rbrack\to X$ is a geodesic such that 
$\gamma (0)=x_0$ and $\gamma (d_{X}(x_0 ,x_{1}))=x_1$. 
Set $x_{t}=\gamma (t d_{X}(x_0 ,x_{1}))$ for each $t\in\lbrack 0,1\rbrack$. 
Fix $a,s\in\lbrack 0,1\rbrack$ with $a\leq s$. 
Then we have 
\begin{equation*}
a d_{X}(y, x_{s})^2
\geq
s d_{X}(y,x_{a})^2
-(s-a)d_{X}(y,x_{0})^2
+sa(s-a)d_{X}(x_{0},x_{1})^2 ,
\end{equation*}
and equality holds when $X$ is a Hilbert space. 
\end{proposition}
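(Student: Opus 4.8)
The plan is to derive the inequality from the defining inequality \eqref{CAT(0)-def-ineq} of a $\mathrm{CAT}(0)$ space, applied not to the full geodesic from $x_0$ to $x_1$ but to its initial sub-arc from $x_0$ to $x_s$. Write $D=d_{X}(x_0,x_1)$ for brevity. First I would dispose of the degenerate case: if $s=0$, then $a\leq s$ forces $a=0$, so $x_a=x_s=x_0$ and both sides of the claimed inequality are $0$; hence from now on assume $s>0$.

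Next, since $\gamma$ is distance-preserving, its restriction to $\lbrack 0,sD\rbrack$ is again a geodesic, with endpoints $\gamma(0)=x_0$ and $\gamma(sD)=x_s$, and $d_{X}(x_0,x_s)=sD$. The point $x_a=\gamma(aD)$ is exactly the point at parameter $a/s$ along this sub-geodesic, that is, $x_a=\gamma\big((a/s)\,d_{X}(x_0,x_s)\big)$, and $a/s\in\lbrack 0,1\rbrack$ because $a\leq s$. Applying \eqref{CAT(0)-def-ineq} to this sub-geodesic with $z=y$ and $t=a/s$, I get
\begin{equation*}
d_{X}(y,x_a)^2\leq\Bigl(1-\frac{a}{s}\Bigr)d_{X}(y,x_0)^2+\frac{a}{s}\,d_{X}(y,x_s)^2-\Bigl(1-\frac{a}{s}\Bigr)\frac{a}{s}\,d_{X}(x_0,x_s)^2 .
\end{equation*}
Since $d_{X}(x_0,x_s)^2=s^2D^2$, the last term equals $-a(s-a)D^2$. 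Multiplying the whole inequality by $s>0$ and rearranging gives
\begin{equation*}
a\,d_{X}(y,x_s)^2\geq s\,d_{X}(y,x_a)^2-(s-a)\,d_{X}(y,x_0)^2+sa(s-a)\,D^2 ,
\end{equation*}
which is precisely the assertion.

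For the equality clause, I would invoke the remark after Definition~\ref{CAT(0)-def} that \eqref{CAT(0)-def-ineq} holds with equality when $X$ is a Hilbert space; since every manipulation above (multiplying by the positive scalar $s$, and rearranging terms) preserves equality, the claimed inequality becomes an identity in the Hilbert space case. Alternatively one may verify it directly by expanding inner products using $x_t=(1-t)x_0+tx_1$.

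I do not expect a genuine obstacle here; the argument is essentially a one-line application of \eqref{CAT(0)-def-ineq} after reparametrization. The only points requiring care are the degenerate case $s=0$ (so that no division by $s$ is attempted) and the observation that $x_s$ lies \emph{on} the geodesic, so that $d_{X}(x_0,x_s)$ equals $s\,d_{X}(x_0,x_1)$ exactly — it is this exact identity that makes the quadratic term collapse to the clean coefficient $sa(s-a)$ after multiplying through by $s$.
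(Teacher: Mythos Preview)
Your proof is correct and is essentially the same as the paper's: both treat $s=0$ separately, then for $s>0$ apply \eqref{CAT(0)-def-ineq} to the sub-geodesic from $x_0$ to $x_s$ with parameter $a/s$, use $d_X(x_0,x_s)=s\,d_X(x_0,x_1)$, and multiply through by $s$. The equality clause is handled identically.
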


\begin{proof}
If $s=0$, then the desired inequality holds with equality clearly. 
If $s>0$, 
then by Definition~\ref{CAT(0)-def}, we have 
\begin{align*}
d_{X}(y,x_{a})^2
&\leq
\frac{s-a}{s}d_{X}(y,x_{0})^2 +\frac{a}{s}d_{X}(y,x_{s})^2 -\frac{(s-a)a}{s^2}d_{X}(x_{0},x_{s})^2 \\
&=
\frac{s-a}{s}d_{X}(y,x_{0})^2 +\frac{a}{s}d_{X}(y,x_{s})^2 -(s-a)a d_{X}(x_{0},x_{1})^2 ,
\end{align*}
and equality holds when $X$ is a Hilbert space. 
Thus we have 
\begin{equation*}
a d_{X}(y, x_{s})^2
\geq
s d_{X}(y,x_{a})^2
-(s-a)d_{X}(y,x_{0})^2
+sa(s-a)d_{X}(x_{0},x_{1})^2 ,
\end{equation*}
and equality holds when $X$ is a Hilbert space. 
\end{proof}

We now recall the $6$-point metric space constructed by Nina Lebedeva, 
which appeared in the arXiv version of \cite[Section 7.2]{AKP} (not in the published version). 
For any $p,q\in\mathbb{R}^3$, 
we denote the line segment $\{ (1-t)p+tq\, |\,t\in\lbrack 0,1\rbrack\}$ by $\lbrack p,q\rbrack$, 
and the set $\lbrack p,q\rbrack\setminus\{ p,q\}$ by $(p,q)$. 
For any subset $S$ of $\mathbb{R}^3$, we denote by $\mathrm{conv}(S)$ the convexhull of $S$ in the Euclidean space $\mathbb{R}^3$. 
Suppose $x_0 ,x_1 ,y_0 ,y_1 ,z_0 ,z_1$ are distinct six points in 
$\mathbb{R}^3$ such that 
\begin{equation}\label{lebedeva-condition}
|(x_0 ,x_1 )\cap (y_0, y_1 )| =1,\quad
\left|(z_0 ,z_1 )\cap\left(\mathrm{conv}(\{ x_0 ,y_0 ,x_1 ,y_1 \})\setminus\mathcal{I}\right)\right| =1,
\end{equation}
where 
$\mathcal{I}=\lbrack x_{0},x_{1}\rbrack\cup\lbrack y_{0},y_{1}\rbrack
\cup\lbrack x_{0},y_{0}\rbrack\cup\lbrack y_{0},x_{1}\rbrack
\cup\lbrack x_{1},y_{1}\rbrack\cup\lbrack y_{1},x_{0}\rbrack$. 
It follows from \eqref{lebedeva-condition} that 
the points $x_{0},y_{0},x_{1},y_{1}$ form the vertices of a convex quadrilateral in a $2$-dimensional affine subspace of $\mathbb{R}^3$, 
and that the points $x_{0},x_{1},y_{0},y_{1},z_{0},z_{1}$ form the vertices of a nonregular convex octahedron in $\mathbb{R}^3$
(see \textsc{Figure}~\ref{lebedeva-fig}). 
We set $L=\{ x_{0},x_{1},y_{0},y_{1},z_{0},z_{1}\}$. 
For any $\varepsilon\geq 0$, 
we define a map $d_{\varepsilon}\colon L\times L\to\lbrack 0,\infty )$ by 
\begin{equation}\label{L-distance-def}
d_{\varepsilon} (p,q)
=
\begin{cases}
\|p-q\| +\varepsilon ,\quad\textrm{if }\{ p,q\}=\{ z_0 ,z_1 \} ,\\
\|p-q\| ,\quad\textrm{if }\{ p,q\}\neq\{ z_0 ,z_1 \}
\end{cases}
\end{equation}
for any $p,q\in L$,
where $\| p-q\|$ is the Euclidean norm of $p-q$.

\begin{figure}[htbp]
\centering
\begin{tikzpicture}[scale=0.6]
\draw [opacity=0.2,fill=gray] (0,0) -- (3,5) -- (14,5) -- (11,0) -- (0,0);
\draw [opacity=0.3,fill=gray] (3,1) -- (10,1) -- (11,4) -- (5,4) -- (3,1);
\draw (3,1) -- (11,4);
\draw (10,1) -- (5,4);
\draw (9,6) -- (7.9,1.6);
\draw (3,1) -- (10,1);
\draw (10,1) -- (11,4);
\draw (11,4) -- (5,4);
\draw (5,4) -- (3,1);
\draw (7.9,1.6)[dotted] -- (7.5,0);
\draw (7.5,0) -- (7,-2);
\draw [fill] (3,1) circle [radius=0.07];
\draw [fill] (11,4) circle [radius=0.07];
\draw [fill] (9,6) circle [radius=0.07];
\draw [fill] (7.9,1.6) circle [radius=0.07];
\draw [fill] (7,-2) circle [radius=0.07];
\draw [fill] (10,1) circle [radius=0.07];
\draw [fill] (5,4) circle [radius=0.07];
\node [right] at (9,6) {$z_1$};
\node [right] at (7,-2) {$z_0$};
\node [right] at (11,4) {$x_0$};
\node [left] at (3,1) {$x_1$};
\node [right] at (10,1) {$y_1$};
\node [left] at (5,4) {$y_0$};
\end{tikzpicture}
\caption{Lebedeva's six points in $\mathbb{R}^3$. }
\label{lebedeva-fig}
\end{figure}

\begin{theorem}[Lebedeva]\label{Lebedeva-th}
Suppose $x_{0},x_{1},y_{0},y_{1},z_{0},z_{1}\in\mathbb{R}^3$ 
are distinct six points that satisfy \eqref{lebedeva-condition}. 
Set $L=\{ x_{0},x_{1},y_{0},y_{1},z_{0},z_{1}\}$. 
Then there exists $C\in (0,\infty )$ such that 
for any $\varepsilon\in (0,C\rbrack$, 
$(L,d_{\varepsilon})$ defined by \eqref{L-distance-def} is a metric space that satisfies 
the $(2+2)$-point comparison and the $(4+2)$-point comparison, 
but does not admit a distance-preserving embedding into any $\mathrm{CAT}(0)$ space. 
\end{theorem}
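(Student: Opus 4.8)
\emph{Overview and the metric-space assertion.} The plan is to prove, in order, that $(L,d_\varepsilon)$ is a metric space, that it satisfies the two comparison conditions, and that it admits no distance-preserving embedding into a $\mathrm{CAT}(0)$ space; the last is where the real work lies. Write $\|\cdot\|$ for the Euclidean norm and recall that $d_\varepsilon$ agrees with the Euclidean metric $d_0$ except on the pair $\{z_0,z_1\}$, where $d_\varepsilon(z_0,z_1)=\|z_0-z_1\|+\varepsilon$. The only triangle inequalities that could fail are $d_\varepsilon(z_0,z_1)\le d_\varepsilon(z_0,p)+d_\varepsilon(p,z_1)$ for $p\in L\setminus\{z_0,z_1\}$; since the six points form a nondegenerate convex octahedron, $z_0,p,z_1$ are never collinear, so $\|z_0-z_1\|<\|z_0-p\|+\|p-z_1\|$, and it suffices to take $C$ at most the minimum of these slacks (no other triangle inequality is affected adversely by enlarging $d(z_0,z_1)$).

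\emph{Comparison conditions.} For each of the two relevant graphs $G$ (on $4$ and on $6$ vertices) and each $f\colon V(G)\to L$ one must produce a map $g$ to a Hilbert space as in Definition~\ref{G-comp}. When $\{z_0,z_1\}$ is an edge of $G$ or is not realized by $f$, the identity map into $\mathbb{R}^3$ works, with equalities everywhere except a harmless strict inequality on that edge. The only case needing an argument is when $\{z_0,z_1\}$ is a non-edge, where one needs $\|g(z_0)-g(z_1)\|\ge\|z_0-z_1\|+\varepsilon$; keeping the other images at their Euclidean positions and displacing $g(z_0),g(z_1)$ slightly within the sets $\bigcap_{q}\bar B(q,\|z_i-q\|)$ (intersections over the $f$-neighbours $q$ of $z_i$) achieves this, because $z_0,z_1\notin\mathrm{conv}(\{x_0,x_1,y_0,y_1\})$ forces $z_i$ to lie on the boundary of a set with nonempty interior. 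This requires shrinking $C$; I would carry out the details following Lebedeva's original argument.

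\emph{Non-embeddability: the rigidity step.} Suppose $L$ embeds isometrically into a $\mathrm{CAT}(0)$ space $X$ for some $\varepsilon>0$; I will derive a contradiction. By \eqref{lebedeva-condition} there are $s,t\in(0,1)$ with $m:=(1-s)x_0+sx_1=(1-t)y_0+ty_1$ the crossing point of the two diagonals, and $c\in(0,1)$ with $w:=(1-c)z_0+cz_1$ the point at which $(z_0,z_1)$ meets $\mathrm{conv}(\{x_0,x_1,y_0,y_1\})\setminus\mathcal I$. Put $m_X=\mathrm{bar}((1-s)\delta_{x_0}+s\delta_{x_1})$ and $m_X'=\mathrm{bar}((1-t)\delta_{y_0}+t\delta_{y_1})$; these lie on the geodesics $[x_0,x_1]$ and $[y_0,y_1]$ and satisfy $d_X(x_i,m_X)=\|x_i-m\|$, $d_X(y_j,m_X')=\|y_j-m\|$. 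Apply \eqref{variance-ineq} to the measure $(1-s)\delta_{x_0}+s\delta_{x_1}$ and the point $m_X'$, bounding $d_X(x_i,m_X')\le\|x_i-m\|$ via \eqref{CAT(0)-def-ineq} along $[y_0,y_1]$ (all distances among $x_0,x_1,y_0,y_1$ being Euclidean): the inequality collapses to $d_X(m_X,m_X')^2\le0$, so $m_X=m_X'$. Hence $d_X(y_j,m_X)=\|y_j-m\|$ for $j=0,1$, which is precisely equality in \eqref{CAT(0)-def-ineq} for the triangle $x_0x_1y_j$ at the interior point $m_X\in[x_0,x_1]$; by the flat triangle theorem for $\mathrm{CAT}(0)$ spaces (\cite{BH}) the convex hull of $\{x_0,x_1,y_j\}$ in $X$ is isometric to the Euclidean triangle with those vertices, for each $j$.

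\emph{Non-embeddability: the contradiction.} Choose $j\in\{0,1\}$ with $w$ in the interior of the triangle $x_0x_1y_j$ (one of them works, since $w$ lies in the open quadrilateral and off the diagonal $[x_0,x_1]$), and write $w=Px_0+Qx_1+Ry_j$ with $P,Q,R>0$. Since this triangle spans a flat convex region $\Delta\subseteq X$ and $u\mapsto Pd_X(u,x_0)^2+Qd_X(u,x_1)^2+Rd_X(u,y_j)^2$ is convex and nonincreasing under nearest-point projection onto $\Delta$, its minimum over $X$ equals the Euclidean value $M:=P\|w-x_0\|^2+Q\|w-x_1\|^2+R\|w-y_j\|^2$, attained at the point of $\Delta$ corresponding to $w$. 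On the other hand, let $w_X=\mathrm{bar}((1-c)\delta_{z_0}+c\delta_{z_1})$ on $[z_0,z_1]$; applying \eqref{variance-ineq} to $(1-c)\delta_{z_0}+c\delta_{z_1}$ and the external point $u\in\{x_0,x_1,y_j\}$, and using $d_X(u,z_k)=\|u-z_k\|$ together with $d_X(z_0,z_1)=\|z_0-z_1\|+\varepsilon$, one gets $d_X(u,w_X)^2\le\|u-w\|^2-\delta$ with $\delta:=c(1-c)\varepsilon(2\|z_0-z_1\|+\varepsilon)>0$ independent of $u$. Hence $Pd_X(x_0,w_X)^2+Qd_X(x_1,w_X)^2+Rd_X(y_j,w_X)^2\le M-\delta<M$, contradicting the minimality of $M$. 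I expect the main obstacle to be the rigidity step: obtaining $m_X=m_X'$ and invoking the flat triangle theorem correctly, together with the combinatorial bookkeeping placing $w$ strictly inside one of the flat triangles; after that the contradiction is a short variance-inequality computation.
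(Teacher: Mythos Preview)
The paper does not contain its own proof of this theorem: it is stated in the preliminaries as a result due to Lebedeva, with a pointer to the arXiv version of \cite[Section~7.2]{AKP}, and is then used as a black box in the proof of Theorem~\ref{6-pt-ineq-th}. So there is no proof in the paper for your proposal to be compared against.

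On the substance of your proposal itself: the metric-space check is fine, and your non-embeddability argument is correct and essentially the standard one. Forcing $m_X=m_X'$ via the variance inequality together with the $\mathrm{CAT}(0)$ inequality along $[y_0,y_1]$ works exactly as you say; equality in \eqref{CAT(0)-def-ineq} at the interior point $m_X\in[x_0,x_1]$ then legitimately triggers the flat-triangle lemma (Proposition~II.2.9 in \cite{BH}), giving a flat Euclidean region $\Delta$ containing $x_0,x_1,y_j$. The contradiction step is clean: nearest-point projection onto the convex set $\Delta$ is distance-nonincreasing to points of $\Delta$, so the weighted sum of squared distances is minimized inside $\Delta$ at the Euclidean barycenter $w$, while the $\mathrm{CAT}(0)$ inequality along $[z_0,z_1]$ with the enlarged distance $d_\varepsilon(z_0,z_1)$ produces a point $w_X$ with strictly smaller value.

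The only part that is not actually carried out is the verification of the $(2+2)$- and $(4+2)$-point comparisons. You openly defer this (``I would carry out the details following Lebedeva's original argument''), and your perturbation sketch --- moving $g(z_0),g(z_1)$ inside intersections of closed balls to increase $\|g(z_0)-g(z_1)\|$ --- would need a genuine argument that the relevant intersections have interior and that one can move in a distance-increasing direction for \emph{every} labelling of the graph in which $\{z_0,z_1\}$ is a non-edge. Since the paper neither defines these comparisons (it sends the reader to \cite[Section~6.2]{AKP}) nor proves the theorem, this remains the part you would have to supply from Lebedeva's original construction.
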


For the definitions of the $(2+2)$-point comparison and the $(4+2)$-point comparison, see \cite[Section 6.2]{AKP}. 
It is known that a metric space satisfies the $(2+2)$-comparison if and only if it satisfies the $\boxtimes$-inequalities. 
Recently, 
the present author \cite{toyoda-ANN} proved the following theorem, 
which states that Lebedeva's $6$-point metric spaces satisfy many $\mathrm{CAT}(0)$ quadratic metric inequalities, including the $\boxtimes$-inequalities 
as special cases. 

\begin{theorem}[\cite{toyoda-ANN}]\label{L-ANN-th}
Suppose $x_{0},x_{1},y_{0},y_{1},z_{0},z_{1}\in\mathbb{R}^3$ 
are distinct six points that satisfy \eqref{lebedeva-condition}. 
Set $L=\{ x_{0},x_{1},y_{0},y_{1},z_{0},z_{1}\}$. 
Then there exists $C\in (0,\infty )$ such that 
for any $\varepsilon\in (0,C\rbrack$, 
$(L,d_{\varepsilon})$ defined by \eqref{L-distance-def} is a metric space that satisfies 
the quadratic metric inequality \eqref{ANN-ineq} 
for any $m$, $n$, $c_1 ,\ldots ,c_m$, $\left( p_{i}^{1}\right)_{i\in\lbrack n\rbrack},\ldots ,\left( p_{i}^{m}\right)_{i\in\lbrack n\rbrack}$, 
$\left( q_{j}^{1}\right)_{j\in\lbrack n\rbrack},\ldots ,\left( q_{j}^{m}\right)_{j\in\lbrack n\rbrack}$, 
$A_1 ,\ldots ,A_m$, $B_1 ,\ldots ,B_m$ as in the statement of Theorem~\ref{ANN-ineq-th}. 
\end{theorem}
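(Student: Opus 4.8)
The plan is to deduce the statement from the single fact that the \emph{unperturbed} configuration sits isometrically in $\mathbb{R}^3$, by combining an explicit formula for the Euclidean slack in \eqref{ANN-ineq} with a rigidity analysis of its equality case driven by \eqref{lebedeva-condition}. First I would reduce to the case $m=1$. Since the admissibility constraint on $(A_k,B_k,p^k,q^k)$ is imposed separately for each $k$ and both sides of \eqref{ANN-ineq} are linear in the families indexed by $k$, it suffices to prove, for a single admissible datum $(n,p,q,A,B)$ and a single map $\phi\colon[n]\to L$ (with $x_i=\phi(i)$), the $m=1$ instance of \eqref{ANN-ineq} for $(L,d_\varepsilon)$; the general case then follows by multiplying the $k$-th instance by $c_k>0$ and summing, each summand being nonnegative.

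Set $\ell=\|z_0-z_1\|$, and write $u=\sum_i p_i\phi(i)$, $v=\sum_j q_j\phi(j)$, and $w_{ij}=\frac{a_{ij}\phi(i)+b_{ij}\phi(j)}{a_{ij}+b_{ij}}$ when $a_{ij}+b_{ij}>0$. I would first record the Euclidean identity (i.e. for $\varepsilon=0$, where every distance is the Euclidean one)
\[
\sum_{i,j}p_iq_j\|\phi(i)-\phi(j)\|^2-\sum_{\substack{i,j\\ a_{ij}+b_{ij}>0}}\frac{a_{ij}b_{ij}}{a_{ij}+b_{ij}}\|\phi(i)-\phi(j)\|^2=\sum_{\substack{i,j\\ a_{ij}+b_{ij}>0}}(a_{ij}+b_{ij})\bigl\|w_{ij}-u\bigr\|^2=:G\ge 0,
\]
proved by the Huygens--Steiner identity together with the admissibility relation $\sum_s a_{is}+\sum_s b_{sj}=p_i+q_j$, which forces the row sums of $A$ and the column sums of $B$ to equal $p_i+\lambda$ and $q_j-\lambda$ for a common $\lambda$, so that all $\lambda$-terms and total masses cancel. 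Thus $G$ is the exact Euclidean (hence $\mathrm{CAT}(0)$) slack. Since only $d_\varepsilon(z_0,z_1)=\ell+\varepsilon$ changes with $\varepsilon$, the slack of \eqref{ANN-ineq} for $(L,d_\varepsilon)$ equals $G+(\beta-\gamma)\bigl((\ell+\varepsilon)^2-\ell^2\bigr)$, where $\gamma=\sum\frac{a_{ij}b_{ij}}{a_{ij}+b_{ij}}$ and $\beta=\sum p_iq_j$, both sums taken over the pairs with $\{\phi(i),\phi(j)\}=\{z_0,z_1\}$. Using $(\ell+\varepsilon)^2-\ell^2\le(2\ell+C)C$ for $\varepsilon\in(0,C]$ and $\gamma\le\tfrac12$, everything reduces to producing a uniform bound $G\ge\kappa(\gamma-\beta)$ with $\kappa>0$ independent of the datum and of $\phi$: any $C$ with $(2\ell+C)C\le\kappa$ then works.

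The heart of the matter is the equality case $G=0$, which is where \eqref{lebedeva-condition} enters. Vanishing of $G$ forces $w_{ij}=u=v$ for every pair with $a_{ij}+b_{ij}>0$; if moreover $\gamma>0$, some such pair maps to $\{z_0,z_1\}$ with $a_{ij}b_{ij}>0$, so $u$ lies in the open segment $(z_0,z_1)$. But $u$ is also a convex combination of the six vertices, and by \eqref{lebedeva-condition} the segment $(z_0,z_1)$ meets $\mathrm{conv}(\{x_0,y_0,x_1,y_1\})$ only at a single point $o\notin\mathcal I$; together with the octahedral structure this pins $u=o$, and $o$ lies on the open segment between two vertices of $L$ only for the pair $\{z_0,z_1\}$ (the membership $o\notin\mathcal I$ is exactly what excludes the diagonals and sides of the quadrilateral). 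Hence every pair carrying positive mass maps to $\{z_0,z_1\}$; the rows of $A$ and columns of $B$ indexed by $\phi^{-1}(L\setminus\{z_0,z_1\})$ then vanish, and the admissibility relation forces $p$ and $q$ to be supported on $\{z_0,z_1\}$, with weights determined by $o=(1-\tau)z_0+\tau z_1$. A short computation gives $\gamma=2\tau(1-\tau)=\beta$, so $\gamma-\beta=0$ at every zero of $G$ with $\gamma>0$.

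Finally I would upgrade this rigidity to the quantitative bound $G\ge\kappa(\gamma-\beta)$. After pushing the datum forward to the six points of $L$ and normalizing (recall the total mass $\sum_{i,j}(a_{ij}+b_{ij})$ is always $2$), the relevant parameters range over a compact set on which $G,\gamma,\beta$ are continuous, and the equality analysis shows that $G=0$ together with $\gamma-\beta>0$ is impossible; a compactness argument should then yield $\kappa>0$, and taking $C$ to be the minimum of the resulting threshold and the metric-space threshold of Theorem~\ref{Lebedeva-th} completes the proof. I expect the main obstacle to be precisely this quantitative step: one must control the \emph{rate} at which $\gamma-\beta\to0$ as $G\to0$ along sequences approaching the equality locus, ruling out that $G$ vanishes to higher order than $\gamma-\beta$. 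Making \eqref{lebedeva-condition} quantitative — so that displacing $u$ away from $o$, or spreading mass off the pair $\{z_0,z_1\}$, costs an amount of slack $G$ comparable to the excess weight $\gamma-\beta$ — is the technical core and is what ultimately fixes the admissible range $(0,C]$ of $\varepsilon$.
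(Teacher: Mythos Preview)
This theorem is not proved in the paper under review: it is quoted verbatim from the separate preprint \cite{toyoda-ANN} and used only as a black box in the proof of Corollary~\ref{6-pt-ANN-coro}. There is therefore no ``paper's own proof'' to compare against here.

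On the substance of your sketch: the Euclidean slack identity
\[
\sum_{i,j}p_iq_j\|\phi(i)-\phi(j)\|^2-\sum_{a_{ij}+b_{ij}>0}\frac{a_{ij}b_{ij}}{a_{ij}+b_{ij}}\|\phi(i)-\phi(j)\|^2
=\sum_{a_{ij}+b_{ij}>0}(a_{ij}+b_{ij})\|w_{ij}-u\|^2
\]
is correct, and your rigidity analysis of the equality case is essentially right: when $G=0$ and $\gamma>0$ one finds $u\in(z_0,z_1)$, the condition \eqref{lebedeva-condition} pins $u=o$ and forces every active pair to map to $\{z_0,z_1\}$, and a short bookkeeping with the admissibility relation gives $\gamma=\beta$. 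This is exactly the right geometric input.

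The genuine gap is the one you yourself flag. Compactness alone cannot produce the uniform bound $G\geq\kappa(\gamma-\beta)$: knowing that $G=0\Rightarrow\gamma-\beta\leq 0$ on a compact parameter space does \emph{not} imply $G\geq\kappa(\gamma-\beta)$ for some $\kappa>0$ (think of $G=x^2$, $\gamma-\beta=x$ on $[0,1]$). One really must show that $G$ vanishes to at most the same order as $\gamma-\beta$ along every approach to the equality locus, and this is work, not soft topology. There is also a preliminary difficulty you pass over: the reduction ``push the datum forward to the six points of $L$'' is not innocent. The quantities $G$ and $\gamma$ are \emph{not} linear in the entries $(a_{ij},b_{ij})$, so they do not push forward to functions of the $6\times 6$ aggregated matrices; and although $G$ dominates its pushed-forward analogue by Jensen, the pushed-forward data no longer satisfy the admissibility relation $\sum_s a_{is}+\sum_s b_{sj}=p_i+q_j$, which your rigidity argument uses to force $p,q$ onto $\{z_0,z_1\}$. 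So even the compact parameter space on which you want to run the argument has to be set up with more care before the quantitative step can be attempted.
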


\section{Proof of Theorem~\ref{6-pt-ineq-th}}\label{proof-sec}

In this section, we prove Theorem~\ref{6-pt-ineq-th}, Corollary~\ref{6-pt-ANN-coro} and Proposition~\ref{O3-prop}. 
In the proof of Theorem~\ref{6-pt-ineq-th} below, we appropriately combine inequalities discussed in Section \ref{preliminaries-sec} 
so that the resulting $\mathrm{CAT}(0)$ quadratic metric inequality holds with equality for Lebedeva's six points in $\mathbb{R}^3$.

\begin{proof}[Proof of Theorem~\ref{6-pt-ineq-th}]
Let $(X,d_{X})$ be a $\mathrm{CAT}(0)$ space. 
Fix $x_0 ,x_1 ,y_0 ,y_1 ,z_0 ,z_1\in X$ and $a,b,c,s,t\in\lbrack 0,1\rbrack$ with $a\leq s$. 
If $a=0$, then the desired inequality becomes 
\begin{equation*}
s(1-t)t
\left(  (1-b)d_{X}(x_{0},y_{0})^2 +b d_{X}(x_{0},y_{1})^2-(1-b)b d_{X}(y_{0},y_{1})^2 \right)
\geq 0,
\end{equation*}
which holds true by \eqref{quadratic-tri-ineq}. 
If $b=0$, then the desired inequality becomes 
\begin{equation*}
s(1-t)t
\left( (1-a) d_{X}(x_{0},y_{0})^2 +a d_{X}(x_{1},y_{0})^2 -(1-a)a d_{X}(x_{0},x_{1})^2 \right)
\geq 0,
\end{equation*}
which holds true by \eqref{quadratic-tri-ineq}. 
So henceforth we assume that $a>0$ and $b>0$. 

We set 
\begin{equation*}
z=\mathrm{bar}\left( (1-c)\delta_{z_0}+c\delta_{z_1}\right) ,\quad
w=\mathrm{bar}\left( (1-t)\delta_{y_1}+(1-s)t\delta_{x_0}+st\delta_{x_1}\right) .
\end{equation*}
By applying \eqref{double-variance-ineq}, we obtain 
\begin{align}
&d_{X}(w,z)^2 
+(1-t)d_{X}(w,y_{1})^2 +(1-s)t d_{X}(w,x_{0})^2 +st d_{X}(w,x_{1})^2 \nonumber\\
&\hspace{4pt}+(1-c)d_{X}(z,z_{0})^2 +c d_{X}(z,z_{1})^2 \nonumber\\
&\hspace{8pt}\leq 
(1-t)(1-c)d_{X}(y_{1},z_{0})^2 +(1-s)t(1-c)d_{X}(x_{0},z_{0})^2 +st(1-c)d_{X}(x_{1},z_{0})^2 \nonumber\\
&\hspace{12pt}+(1-t)c d_{X}(y_{1},z_{1})^2 +(1-s)tc d_{X}(x_{0},z_{1})^2 +stc d_{X}(x_{1},z_{1})^2 .\label{6-pt-doble-variance-ineq}
\end{align}
We set 
\begin{equation*}
x=\mathrm{bar}\left( (1-s)\delta_{x_0}+s\delta_{x_1}\right) .
\end{equation*}
By applying \eqref{CAT(0)-def-ineq} and \eqref{quadratic-tri-ineq}, we obtain
\begin{align}
(1-t)d_{X}(w,y_{1})^2 &+(1-s)t d_{X}(w,x_{0})^2 +st d_{X}(w,x_{1})^2 \nonumber\\
&\geq
(1-t)d_{X}(w,y_{1})^2 +t\left( d_{X}(w,x)^2 +(1-s)s d_{X}(x_{0},x_{1})^2 \right) \nonumber\\
&=
\left( (1-t)d_{X}(w,y_{1})^2 +t d_{X}(w,x)^2 \right) +(1-s)st d_{X}(x_{0},x_{1})^2 \nonumber\\
&\geq
(1-t)t d_{X}(y_{1},x)^2 +(1-s)st d_{X}(x_{0},x_{1})^2 .\label{6-pt-double-left1-ineq}
\end{align}
By applying \eqref{quadratic-tri-ineq}, we obtain
\begin{equation}
(1-c)d_{X}(z,z_{0})^2 +c d_{X}(z,z_{1})^2
\geq
(1-c)c d_{X}(z_{0},z_{1})^2 .\label{6-pt-double-left2-ineq}
\end{equation}
By \eqref{6-pt-doble-variance-ineq}, \eqref{6-pt-double-left1-ineq} and \eqref{6-pt-double-left2-ineq}, we have 
\begin{multline}\label{6-pt-moto-ineq}
d_{X}(w,z)^2 
+(1-t)t d_{X}(y_{1},x)^2 +(1-s)st d_{X}(x_{0},x_{1})^2 +(1-c)c d_{X}(z_{0},z_{1})^2 \\
\leq
(1-t)(1-c)d_{X}(y_{1},z_{0})^2 +(1-s)t(1-c)d_{X}(x_{0},z_{0})^2 +st(1-c)d_{X}(x_{1},z_{0})^2 \\\
+(1-t)c d_{X}(y_{1},z_{1})^2 +(1-s)tc d_{X}(x_{0},z_{1})^2 +stc d_{X}(x_{1},z_{1})^2. 
\end{multline}
We set 
\begin{equation*}
x'=\mathrm{bar}\left( (1-a)\delta_{x_0}+a\delta_{x_1}\right) .
\end{equation*}
We have 
\begin{equation}\label{6-pt-b-tri-ineq}
b d_{X}(y_{1},x' )^2
\geq
(1-b)b d_{X}(y_{0},y_{1})^2 -(1-b)d_{X}(y_{0},x' )^2
\end{equation}
by \eqref{quadratic-tri-ineq}, and 
\begin{equation}\label{6-pt-a-thin-ineq}
d_{X}(y_{0},x' )^2
\leq
(1-a)d_{X}(y_{0},x_{0})^2 +a d_{X}(y_{0},x_{1})^2
-(1-a)a d_{X}(x_{0},x_{1})^2
\end{equation}
by \eqref{CAT(0)-def-ineq}. 
Since we are assuming that $b>0$, combining \eqref{6-pt-b-tri-ineq} and \eqref{6-pt-a-thin-ineq}, we obtain 
\begin{multline}\label{6-pt-thick-xdash-ineq}
d_{X}(y_{1},x' )^2 
\geq
(1-b)d_{X}(y_{0},y_{1})^2 
-\frac{(1-a)(1-b)}{b}d_{X}(y_{0},x_{0})^2 \\
-\frac{a(1-b)}{b}d_{X}(y_{0},x_{1})^2
+\frac{(1-a)a(1-b)}{b}d_{X}(x_{0},x_{1})^2 .
\end{multline}
Since we are assuming that $a>0$, it follows from Proposition~\ref{thick-prop} that 
\begin{equation}\label{6-pt-thick-prop-ineq}
d_{X}(y_{1},x)^2
\geq
\frac{s}{a}d_{X}(y_{1},x' )^2 -\frac{s-a}{a}d_{X}(y_{1},x_{0})^2 +s(s-a)d_{X}(x_{0},x_{1})^2 .
\end{equation}
Combining \eqref{6-pt-thick-xdash-ineq} and \eqref{6-pt-thick-prop-ineq}, we obtain 
\begin{multline}\label{6-pt-thick-ineq}
d_{X}(y_{1},x)^2
\geq
\frac{s\left( (1-a)(1-b)+(s-a)b\right)}{b}d_{X}(x_{0},x_{1})^2
+\frac{s(1-b)}{a}d_{X}(y_{0},y_{1})^2 \\
-\frac{s(1-a)(1-b)}{ab}d_{X}(y_{0},x_{0})^2 
-\frac{s(1-b)}{b}d_{X}(y_{0},x_{1})^2 
-\frac{s-a}{a}d_{X}(y_{1},x_{0})^2 .
\end{multline}
Since we are assuming that $a>0$ and $b>0$, combining \eqref{6-pt-moto-ineq} and \eqref{6-pt-thick-ineq}, we obtain 
\begin{align}
&ab\, d_{X}(w,z)^2 
+sta \big( (1-t)(1-a)+(1-s)tb\big) \, d_{X}(x_0 ,x_1 )^2 \nonumber\\
&\hspace{4pt}+s(1-t)t(1-b)b \, d_{X}(y_0 ,y_1 )^2 
+ab(1-c)c \, d_{X}(z_0 ,z_1 )^2 \nonumber\\
&\hspace{8pt}\leq 
(1-s)tab(1-c)\, d_{X}(x_0 ,z_0 )^2
+stab(1-c)\, d_{X}(x_1 ,z_0 )^2 
+(1-t)ab(1-c) \, d_{X}(y_1 ,z_0 )^2 \nonumber\\
&\hspace{12pt}+(1-s)tabc \, d_{X}(x_0 ,z_1 )^2 
+stabc \, d_{X}(x_1 ,z_1 )^2 
+(1-t)abc\, d_{X}(y_1 ,z_1 )^2 \nonumber\\
&\hspace{16pt}+s(1-t)t(1-a)(1-b) \, d_{X}(x_0 ,y_0 )^2
+s(1-t)ta(1-b) \, d_{X}(x_1 ,y_0 )^2 \nonumber\\ 
&\hspace{20pt}+(1-t)t(s-a)b \, d_{X}(x_0 ,y_1 )^2 ,
\label{6-point-deseired-ineq}
\end{align}
which implies the desired inequality \eqref{6-pt-ineq}.

To prove the moreover part of the theorem, we assume that 
$a,b,c,s,t\in(0,1)$ and $a<s$ from now on. 
Furthermore, we assume that $X$ is the three dimensional Euclidean space $\mathbb{R}^3$, 
and the points $x_0 ,x_1 ,y_0 ,y_1 ,z_0 ,z_1 \in X$ satisfy \eqref{lebedeva-condition} and 
\begin{equation*}
(1-t)y_1 +t\left( (1-s) x_{0}+s x_{1}\right)
=
(1-c)z_{0}+c z_{1},\quad
(1-a)x_{0}+a x_{1}
=
(1-b) y_{0}+b y_{1}.
\end{equation*}
Such $x_0 ,x_1 ,y_0 ,y_1 ,z_0 ,z_1$ exist in $\mathbb{R}^3$ clearly 
(see \textsc{Figure}~\ref{R3-fig}). 
Then we have 
\begin{align*}
w
&=
(1-t)y_{1}+t \left( (1-s)x_{0}+s x_{1}\right)
=
(1-c)z_{0}+cz_{1}
=
z,\\
x
&=
(1-s)x_{0}+sx_{1},\quad
x'
=
(1-a)x_{0}+a x_{1}
=
(1-b)y_{0}+b y_{1} .
\end{align*}
It follows that all inequalities in \eqref{6-pt-doble-variance-ineq}--\eqref{6-point-deseired-ineq} hold with equality. 
Therefore, equality holds in \eqref{6-pt-ineq} as well since we have $w=z$.

\begin{figure}[htbp]
\centering
\begin{tikzpicture}[scale=0.6]
\draw [opacity=0.3,fill=gray] (0,0) -- (3,5) -- (14,5) -- (11,0) -- (0,0);
\draw (3,1) -- (11,4);
\draw (10,1) -- (4,4);
\draw (9,6) -- (7.9,1.6);
\draw (10,1) -- (223/37,79/37);
\draw (7.9,1.6)[dotted] -- (7.5,0);
\draw (7.5,0) -- (7,-2);
\draw [fill] (3,1) circle [radius=0.07];
\draw [fill] (11,4) circle [radius=0.07];
\draw [fill] (9,6) circle [radius=0.07];
\draw [fill] (7.9,1.6) circle [radius=0.07];
\draw [fill] (7,-2) circle [radius=0.07];
\draw [fill] (10,1) circle [radius=0.07];
\draw [fill] (4,4) circle [radius=0.07];
\draw [fill] (223/37,79/37) circle [radius=0.07];
\draw [fill] (7,2.5) circle [radius=0.07];
\node [right] at (9,6) {$z_1$};
\node [right] at (7,-2) {$z_0$};
\node [right] at (11,4) {$x_0$};
\node [left] at (3,1) {$x_1$};
\node [right] at (10,1) {$y_1$};
\node [left] at (4,4) {$y_0$};
\node [below left] at (7.9,1.6) {$w=z$};
\node [above] at (7,2.5) {$x'$};
\node [above] at (223/37,79/37) {$x$};
\end{tikzpicture}
\caption{The points $x_0 ,x_1 ,y_0 ,y_1 ,z_0 ,z_1$ in $\mathbb{R}^3$. }
\label{R3-fig}
\end{figure}

Let $L=\{ x_0 ,x_1 ,y_0 ,y_1 ,z_0 ,z_1 \}$, and define $d_{\varepsilon}\colon L\times L\to\lbrack 0,\infty )$ by 
\eqref{L-distance-def} for each $\varepsilon\in\lbrack 0,\infty )$. 
We have observed that equality holds in \eqref{6-pt-ineq} if we choose $x_0 ,x_1 ,y_0 ,y_1 ,z_0 ,z_1$ as above, 
and set $d_{X}=d_{0}$. 
Since the coefficient of $d_X (z_{0},z_{1})^2$ in \eqref{6-pt-ineq} is positive, 
it follows that for any $\varepsilon\in (0,\infty )$, 
\eqref{6-pt-ineq} does not hold true if we choose $x_0 ,x_1 ,y_0 ,y_1 ,z_0 ,z_1$ as above, and set $d_{X}= d_{\varepsilon}$. 
On the other hand, for a sufficiently small $\varepsilon\in (0,\infty )$, $(L,d_{\varepsilon})$ satisfies the $\boxtimes$-inequalities by Theorem~\ref{Lebedeva-th}, 
and therefore any $5$-point subset of $(L,d_{\varepsilon})$ admits 
a distance-preserving embedding into a $\mathrm{CAT}(0)$ space by Theorem~\ref{five-th}, which completes the proof. 
 \end{proof}

\begin{proof}[Proof of Corollary~\ref{6-pt-ANN-coro}]
As we observed in the proof of Theorem~\ref{6-pt-ineq-th}, 
for any $a,b,c,s,t\in(0,1)$ with $a<s$, 
there exists a Lebedeva's family of spaces $(L,d_{\varepsilon})$, $\varepsilon >0$ such that 
each $(L,d_{\varepsilon})$ does not satisfy the inequality \eqref{6-pt-ineq} for the parameters 
$a,b,c,s,t$. 
On the other hand, by Theorem~\ref{L-ANN-th}, for a sufficiently small $\varepsilon >0$, 
$(L,d_{\varepsilon})$ satisfies all inequalities of the form \eqref{ANN-ineq}, 
which proves the corollary. 
\end{proof}

\begin{proof}[Proof of Proposition~\ref{O3-prop}]
Fix $x_0 ,x_1 ,y_0 ,y_1 ,z_0 ,z_1 \in X$. 
Since $X$ satisfies the $O_3$-comparison, there exist points 
$\tilde{x}_0 ,\tilde{x}_1 ,\tilde{y}_0 ,\tilde{y}_1 ,\tilde{z}_0 ,\tilde{z}_1$ in a Hilbert space $\mathcal{H}$ 
such that 
\begin{equation}\label{abc-ineqs}
\|\tilde{p}_{0}-\tilde{p}_{1}\|\geq d_X (p_{0} ,p_{1}),\quad
\|\tilde{q}_{i}-\tilde{r}_{j}\|\leq d_X (q_{i} ,r_{j})
\end{equation}
for any $i,j\in\{ 0,1\}$ and any $p,q,r\in\{ x,y,z\}$ with $q\neq r$. 
Because $\mathcal{H}$ is a $\mathrm{CAT}(0)$ space, 
the points $\tilde{x}_0 ,\tilde{x}_1 ,\tilde{y}_0 ,\tilde{y}_1 ,\tilde{z}_0 ,\tilde{z}_1$ satisfy the inequality of the form 
\eqref{6-pt-ineq} by Theorem~\ref{6-pt-ineq-th}. 
Combining this fact with \eqref{abc-ineqs}, we can observe that 
the points $x_0 ,x_1 ,y_0 ,y_1 ,z_0 ,z_1 \in X$ satisfy the inequality \eqref{6-pt-ineq}. 
\end{proof}

\bigskip
\begin{acknowledgement}
This work was supported in part by JSPS KAKENHI Grant Number JP21K03254. 
The author thanks Tadashi Fujioka, Takefumi Kondo and Takayuki Morisawa for helpful discussions. 
The author also thanks the anonymous referee for valuable comments.
\end{acknowledgement}

\end{document}